\newcommand{\A} {\mathcal{A}}
\newcommand{\FF} {\mathcal{FF}}
\renewcommand{\AA} {\mathcal{AA}}
\newcommand{\B} {\mathcal{B}}
\newcommand{\BO} {\mathcal{BO}}
\newtheorem{theorem}{Theorem}
\newtheorem{lemma}[theorem]{Lemma}
\newenvironment{proof}{\noindent\emph{Proof}.\ }{\eop}
\newcommand\Bbox{
{\unskip\nobreak\hfil\penalty50
\hskip1em\hbox{}\nobreak\hfil{\lower .5pt \hbox{$\Box$}}
\parfillskip=0pt \finalhyphendemerits=0 \par}
}
\newcommand\eop{\ifmmode {\hbox{\Bbox}} \else \Bbox \fi}
\begin{document}

\title {{\bf Online coloring graphs with high girth and high oddgirth}}

\author{{J. Nagy-Gy\"orgy}\thanks {Department of Mathematics, University of
Szeged, Aradi V\'ertan\'uk tere 1, H-6720 Szeged, Hungary, email:
Nagy-Gyorgy@math.u-szeged.hu}}

\date{}

\maketitle

\begin{abstract}
We give an upper bound for the online chromatic number of graphs with high girth and for graphs with high oddgirth generalizing Kierstead's algorithm for graphs that contain neither a $C_3$ or $C_5$ as an induced subgraph.
\end{abstract}

{\it keywords:} online algorithms, combinatorial problems

\section{Introduction}
A considerable effort has been devoted to color graphs with high girth or high oddgirth. At the same time online coloring has attracted major attention. We unify these two lines of research and investigate how online coloring techniques can be applied to color graphs with high girth or oddgirth.

A coloring of a graph is an assignment of positive integers to the vertices of the
graph so that every edge contains vertices having different colors.
A $k$-coloring of a graph is a coloring of it where the number of used colors is at most $k$.
In the online graph coloring problem the algorithm receives the vertices of
the graph in some order $v_1,\dots,v_n$ and it must color $v_i$ by only looking at
the subgraph induced by $V_i=\{v_1,\dots,v_i\}$.

Online graph coloring has been investigated in several papers, one can find many details
on the problem in the survey paper \cite{KIE1}. 
Some results are proved about the straightforward online graph coloring algorithm
First Fit (which we denote by $\FF$). $\FF$ uses the smallest color for each vertex which does not
make a monochromatic edge. In \cite{GYA} it is shown that this algorithm is the best possible for the trees.
In \cite{LOV}  an online algorithm is presented which
colors $k$-colorable graphs on $n$ vertices with at most $O(n\log^{(2k-3)} n/ \log^{(2k-4)} n)$ colors.
The best known lower bound \cite{Vis} states that no online algorithm can color every $k$-colorable
graph on $n$ vertices with less then $\Omega(\log^{k-1} n)$ colors. 
In \cite{KIE2} an online algorithm is presented which colors $k$-colorable graphs on $n$ vertices with at most $O(n^{1-1/k!})$ colors. The author of \cite{KIE2} introduces an algorithm $\B_n$ which uses less than $3n^{1/2}$ colors to color any online graph on $n$ vertices that induces neither $C_3$ nor $C_5$. In this paper we will generalize this result for graphs with high girth and graphs with high oddgirth. In the latter case we will apply an online algorithm developed by Lov\'asz (see section \ref{oddg}) which can color a bipartite graph on $n$ vertices with at most $2 \log_2 n$ colors.

\section{Preliminaries}

For a graph $G$ the minimum number of colors which is enough to color the
graph is called the chromatic number of the graph and denoted by $\chi(G)$.

An online graph is a structure $G^{\prec}=(G,{\prec})$ where $G=(V,E)$ is a graph and ${\prec}$ is a linear order of its vertices. Let $G_i$ denote the online graph induced by the ${\prec}$-first $i$ elements $V_i$ of $V$.

An online graph coloring algorithm colors the $i$-th vertex of the graph  by only looking at
the subgraph $G_i$.
For an online algorithm $\A$ and an online graph $G^{\prec}$, the number of
colors used by $\A$ to color $G^{\prec}$ is denoted by $\chi_\A(G^{\prec})$.
For a graph $G$, $\chi_\A(G)$ denotes the maximum of the $\chi_\A(G^{\prec})$
values over all orderings ${\prec}$. 

The girth of a graph $G$ denoted $g(G)$ is the length of its shortest cycle and the oddgirth of $G$ denoted $g_o(G)$ is length of its shortest odd cycle.

The distance  $\mathrm{dist}(u,v)$ of vertices $u$ and $v$ is the length of the shortest $uv$ path. For a positive integer $d$, let $N_d(v)$ to be the set of vertices with positive distance at most $d$ from vertex $v$:
$$N_d(v) = \{ u\in V(G) : 1\le \mathrm{dist}(u,v)\le d \}.$$
$N_{d,\mathrm{odd}}(v)$ is the set of vertices with an positive \emph{odd} distance at most $d$ from vertex $v$:
$$N_{d,\mathrm{odd}}(v) = \{ u\in V(G) : 1\le \mathrm{dist}(u,v)\le d \textrm{ and } \mathrm{dist}(u,v) \textrm{ is odd}\}.$$
For any $S\subset V(G)$ let us define 
$$N_d(S)=\bigcup_{v\in S} N_d(v) \setminus S \quad\textrm{and}\quad N_{d,\mathrm{odd}}(S)=\bigcup_{v\in S} N_{d,\mathrm{odd}}(v) \setminus S.$$
Let $N^{\prec}_d(v)$ the set of vertices preceding $v$ with a positive distance at most $d$ from vertex $v$:
$$N^{\prec}_d(v) = \{ u\in V(G^{\prec}) :  u{\prec}v,\ 1\le \mathrm{dist}(u,v)\le d \}.$$
$N^{\prec}_{d,\mathrm{odd}}(v)$ is the set of vertices preceding $v$ with a positive odd distance at most $d$ from vertex $v$:
$$N^{\prec}_{d,\mathrm{odd}}(v) = \{  u\in V(G^{\prec}) :  u{\prec}v,\ 1\le \mathrm{dist}(u,v)\le d \textrm{ and } \mathrm{dist}(u,v) \textrm{ is odd}\}.$$
For any $S\subset V(G^{\prec})$ let 
$$N^{\prec}_d(S)=\bigcup_{v\in S} N^{\prec}_d(v) \setminus S \quad\textrm{and}\quad N^{\prec}_{d,\mathrm{odd}}(S)=\bigcup_{v\in S} N^{\prec}_{d,\mathrm{odd}}(v) \setminus S.$$
Note that $N_1(v)=N(v)$ and $N^{\prec}_1(v)=N^{\prec}(v)$ are just the neighbors and preceding neighbors of $v$. Furthermore $N_1(S)=N(S)$ and $N^{\prec}_1(S)=N^{\prec}(S)$.

For any $m>0$ let $[m]:=\{1,2,\ldots,m\}$.

We can assume that our algorithms know the number of vertices of $G$ by the following Lemma \cite{KIE1}.
\begin{lemma}[Kierstead \cite{KIE1}] 
Let $\Gamma$ be a class of graphs and $f$ be an integer valued function on the positive integers such that $f(x) \le f(x+1) \le f(x)+1$, for all $x$. If for every $n$, there exists an online coloring algorithm $\A_n$ such that for every graph $G\in \Gamma$ on $n$ vertices, $\chi_{\A_n}(G)\le f(n)$ then there exists a fixed online coloring algorithm $\A$ such that for every $G\in \Gamma$ on $n$ vertices $\chi_{\A}(G)\le 4f(n)$.
\end{lemma}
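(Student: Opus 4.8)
The plan is a guess-and-double simulation, so that the single algorithm $\A$ never needs to know $n$ in advance. Partition the arriving vertices into phases according to the current count: phase $t$ (for $t \ge 0$) consists of the vertices $v_i$ with $2^t \le i < 2^{t+1}$, so that phase $t$ receives at most $2^t$ vertices. Reserve for each $t$ a block $P_t$ of $f(2^t)$ colors, with the blocks $P_0, P_1, \dots$ pairwise disjoint. During phase $t$ I would run a private copy of $\A_{2^t}$ on the subgraph induced by the phase-$t$ vertices alone, and give each such vertex the color that this copy assigns to it, shifted into the block $P_t$. When $\Gamma$ is closed under adding isolated vertices, as for the girth and oddgirth classes treated here, a phase with fewer than $2^t$ vertices may be padded to exactly $2^t$ so that the guarantee for $\A_{2^t}$ applies; the last, incomplete phase is handled this way. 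Since each copy sees only its own phase, and the color of $v_i$ depends only on the phase-$t$ vertices preceding it, this is a legal online algorithm.

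Next I would check that the resulting coloring is proper. An edge with both endpoints in a common phase $t$ is colored by one run of $\A_{2^t}$, which by hypothesis properly colors the at most $2^t$ vertices it is shown; hence its endpoints receive distinct colors of $P_t$. An edge whose endpoints lie in two different phases $t \ne t'$ is automatically bichromatic because $P_t \cap P_{t'} = \emptyset$. Thus every edge is properly colored and $\A$ is correct.

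It remains to count colors, and this is the step I expect to be the main obstacle. If $G$ has $n$ vertices then only the phases $0,1,\dots,T$ with $T = \lfloor \log_2 n \rfloor$ occur, so $\A$ uses at most $\sum_{t=0}^{T} f(2^t)$ colors, and the whole argument reduces to the estimate $\sum_{t=0}^{T} f(2^t) \le 4 f(n)$. This is precisely where the hypotheses $f(x) \le f(x+1) \le f(x)+1$ must be used: monotonicity gives $f(2^t) \le f(n)$ for every $t \le T$, so the last term $f(2^T) \le f(n)$ is dominant, while the Lipschitz-type bound $f(x+1) \le f(x)+1$ is what forces the earlier terms to decay geometrically relative to $f(2^T)$, allowing the series to be summed against its last term rather than bounded by the hopelessly weak $(T+1)f(n)$. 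The delicate point is to turn the pointwise growth restriction on $f$ into ratio control of the form $f(2^t) \le c\,f(2^{t+1})$ with a constant $c$ small enough that $\sum_{t} c^{\,T-t} \le 4$; once this geometric comparison is established, the bound $\chi_\A(G) \le 4 f(n)$ follows by combining it with the correctness verified above.
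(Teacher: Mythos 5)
Your simulation skeleton (disjoint palettes, a private copy of $\A_{m}$ per phase, padding incomplete phases with isolated vertices) is sound, and it is the same skeleton as in Kierstead's argument, which the paper only cites and does not reproduce. The genuine gap is exactly the step you flagged: the estimate $\sum_{t=0}^{T} f(2^t) \le 4f(n)$ is false in general, and the ratio control $f(2^t)\le c\,f(2^{t+1})$ with a uniform $c<1$ \emph{cannot} be derived from the hypotheses, because every constant function satisfies $f(x)\le f(x+1)\le f(x)+1$, and for constant $f$ the ratio is exactly $1$. Concretely, for $f\equiv 1$ your bound is $\lfloor\log_2 n\rfloor+1$, and for $f(x)=\lceil\log_2 x\rceil+1$ it is $\Theta(\log^2 n)$, both of which exceed $4f(n)$ for large $n$. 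Even for the functions this paper actually needs, $f(n)=(d+1)n^{1/(d+1)}$, the best ratio is $c=2^{-1/(d+1)}$, so $\sum_t c^{T-t}\approx (d+1)/\ln 2$, which is not bounded by $4$ uniformly in $d$. So phasing by vertex count loses a factor depending on $f$; the approach itself, not just the last estimate, has to change.

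The repair is to let $f$ dictate the phase lengths so that the \emph{color budget}, not the vertex count, doubles from phase to phase. For each $t$ with $2^t\ge f(1)$ set $u_t=\max\{x: f(x)\le 2^t\}$; this is well defined whenever $f$ is unbounded (the case in all applications here), and since $f$ is integer valued, nondecreasing and satisfies $f(x+1)\le f(x)+1$, it attains every integer in its range, so in fact $f(u_t)=2^t$. Phase $t$ consists of the vertices arriving at positions $u_{t-1}+1,\dots,u_t$; on them run a private, padded copy of $\A_{u_t}$ with a fresh palette of $f(u_t)\le 2^t$ colors. Properness of the coloring is your argument verbatim. For the count, let $T$ be the last phase entered on an $n$-vertex input. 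If $T$ is not the first phase, then $n\ge u_{T-1}+1$, so maximality of $u_{T-1}$ gives $f(n)\ge 2^{T-1}+1$; if $T$ is the first phase, minimality of $T$ gives $f(n)\ge f(1)>2^{T-1}$. Either way the total number of colors is at most $\sum_{t\le T}2^t<2^{T+1}=4\cdot 2^{T-1}<4f(n)$. Note how the roles are reversed relative to your plan: the geometric decay is built in by fiat (each budget is exactly twice the previous one), and the hypotheses on $f$ are used only to relate the last budget $2^T$ back to $f(n)$, which is the one implication they are actually strong enough to support.
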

Kierstead's algorithm \cite{KIE1,KIE2} which will be modified is the following.
\begin{quote}
\textbf{Algorithm $\B_n$} \\
Consider the input sequence $v_1{\prec}\ldots{\prec}v_n$ of an online graph $G^{\prec}$ containing neither $C_3$ nor $C_5$. Initialize by setting $W_i=\emptyset$ for all $n^{1/2} < i < 2n^{1/2}$. At the $s$-th stage the algorithm processes the vertex $v_s$ as follows.
\begin{enumerate}
\item If there exists $i\in [n^{1/2}]$ such that $v_s$ is not adjacent to any vertex colored $i$ then color $v_s$ by the least such $i$. (Coloring by $\FF$ with $n^{1/2}$ colors.)
\item Otherwise, if there exists $i>n^{1/2}$ such that $v_s\in N(W_i)$ then color $v_s$ by the least such $i$.
\item Otherwise, let $j>n^{1/2}$ be the least integer with $W_j=\emptyset$.
Set $W_j=\{v \in N^{\prec}(v_s)$ : the color of $v$ is at most $n^{1/2}\}$ and color $v_s$ such that $j$.
\end{enumerate}
\end{quote}
The following lemma holds for algorithm $\B_n$.
\begin{lemma}[Kierstead \cite{KIE1,KIE2}]\label{kier}
Algorithm $\B_n$ produces a coloring of any graph $G^{\prec}$ on $n$ vertices that induces neither $C_3$ nor $C_5$ with fewer than $2n^{1/2}$ colors.
\end{lemma}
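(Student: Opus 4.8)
The plan is to verify two separate things: that algorithm $\B_n$ always produces a \emph{proper} coloring, and that it uses fewer than $2n^{1/2}$ colors. Throughout I write $m=n^{1/2}$ and call the colors in $[m]$ \emph{low} and the colors $i$ with $m<i<2m$ \emph{high}; there are $m$ low colors and $m-1$ high colors, so it suffices to show that at most $m-1$ high colors are ever used, since then the total is at most $2m-1<2n^{1/2}$.

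First I would record two structural facts about the sets $W_j$, neither of which uses the girth hypothesis. Whenever a vertex $v_s$ is processed in step 3 it has failed step 1, so it has a preceding neighbor of every low color; hence $W_j=\{v\in N^{\prec}(v_s):\text{color}(v)\le m\}$ contains at least one vertex of each color in $[m]$, and so $|W_j|\ge m$. Moreover the created sets are pairwise disjoint: if $W_{j_1}$ is created at $v_{s_1}$ and $W_{j_2}$ at $v_{s_2}$ with $s_1<s_2$, and some $u$ lay in both, then $u\in W_{j_2}$ forces $u\in N^{\prec}(v_{s_2})$, so $v_{s_2}\sim u\in W_{j_1}$ and thus $v_{s_2}\in N(W_{j_1})$; but then $v_{s_2}$ would already have been colored in step 2, contradicting that it reached step 3.

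Properness is where the forbidden cycles enter, and I expect this to be the main obstacle. Low colors are assigned by First Fit, so monochromatic low edges cannot occur. For a high color $i$, let $v_s$ be its initializer and take any two other vertices $w,w'$ colored $i$ in step 2, adjacent respectively to $x,x'\in W_i\subseteq N(v_s)$. The claim is that the color class of $i$ is independent: if $v_s\sim w$ then $v_s,x,w$ would be a triangle; if $w\sim w'$ with $x=x'$ then $w,w',x$ would be a triangle; and if $w\sim w'$ with $x\ne x'$ then $w,x,v_s,x',w'$ would be a five-cycle. The delicate case is the last one, since the paper forbids only \emph{induced} $C_5$: here I would check that each of the five possible chords ($v_sw$, $v_sw'$, $xx'$, $xw'$, $x'w$) would itself create a triangle, so that the absence of an induced $C_3$ upgrades this five-cycle to an induced $C_5$, contradicting the hypothesis. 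Hence no two vertices of color $i$ are adjacent and the coloring is proper.

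It remains to bound the number $k$ of high colors used, which equals the number of step-3 executions. The $k$ disjoint sets $W_j$ contribute at least $km$ low-colored vertices, while the $k$ initializer vertices are high-colored and hence disjoint from them; therefore $k(m+1)\le n=m^2$, giving $k\le m-1$. To see that step 3 never actually runs out of high colors I would carry this count as an invariant: just before a step-3 execution at $v_s$, the $k_{s-1}$ existing sets, their $k_{s-1}$ initializers, and $v_s$ itself are all distinct, so $k_{s-1}(m+1)+1\le n$, whence $k_{s-1}\le m-1$ with equality only in the extremal partition where $s=n$ and every low-colored vertex already lies in some $W_j$; but then $v_n$, having a low-colored neighbor, lies in $N(W_j)$ and is colored in step 2, never reaching step 3. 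Thus $k_{s-1}\le m-2$ at every step-3 execution, a fresh color $k_s\le m-1$ is always available, and $\B_n$ finishes using at most $2m-1<2n^{1/2}$ colors.
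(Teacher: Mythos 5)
Your proof is correct, and its skeleton is the same as the one the paper relies on: properness is forced by the forbidden short cycles, and the color count follows from the pairwise disjointness of the sets $W_j$ together with $|W_j|\ge n^{1/2}$ (the paper never proves Lemma~\ref{kier} directly; it cites Kierstead and observes that $\B_{n,1}=\B_n$, with the proof of Theorem~\ref{t1} running along the same lines). That said, you add two things that the paper's own argument does not supply, and both are genuinely needed for the lemma as stated. First, Theorem~\ref{t1} with $d=1$ assumes girth at least $5$, so any $5$-cycle (or shorter) it exhibits is an immediate contradiction; the lemma's hypothesis excludes only an \emph{induced} $C_5$ (a $C_4$ may well be present), and your chord analysis --- each of the five possible chords of the cycle $w,x,v_s,x',w'$ closes a triangle, so either a $C_3$ or a chordless $C_5$ must occur --- is exactly the extra step required to cover this larger class of graphs. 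Second, you check that Step~3 never exhausts the $m-1$ available high colors, i.e.\ that the algorithm is well defined; the paper takes this for granted, while your invariant $k_{s-1}(m+1)+1\le n$, with the equality case eliminated by observing that such a vertex would have been caught by Step~2, settles it and simultaneously delivers the strict bound $2m-1<2n^{1/2}$ claimed in the lemma. (Like the paper, you tacitly treat $n^{1/2}$ as an integer; the rounding issues are harmless and affect neither argument.)
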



\section{Main results}
\subsection{Graphs with high girth}
Now we modify Kierstead's algorithm for graphs with high girth.
\begin{quote}
\textbf{Algorithm $\B_{n,d}$} \\
Let $d$ be a positive integer.
Consider the input sequence $v_1{\prec}\ldots{\prec}v_n$ of an online graph $G^{\prec}$ with $g(G^{\prec})\ge 4d+1$. Initialize by setting $W_i=\emptyset$ for all $dn^{1/(d+1)} < i < (d+1)n^{1/(d+1)}$. At the $s$-th stage the algorithm processes the vertex $v_s$ as follows.
\begin{enumerate}
\item If there exists $i\in [dn^{1/(d+1)}]$ such that $v_s$ is not adjacent to any vertex colored $i$ then color $v_s$ by the least such $i$.
\item Otherwise, if there exists $i>dn^{1/(d+1)}$ such that $v_s\in N_d(W_i)$ then color $v_s$ by the least such $i$.
\item Otherwise, let $j>dn^{1/(d+1)}$ be the least integer such that $W_i=\emptyset$. Set $W_j=N_d^{\prec}(v_s)$ and color $v_s$ with $j$. 
\end{enumerate}
\end{quote}

\begin{theorem}\label{t1}
Let $d\ge 1$ be an integer. Algorithm $\B_{n,d}$ produces a coloring of any graph $G^{\prec}$ on $n$ vertices with girth $g\ge 4d+1$ with less than $(d+1)n^{1/(d+1)}$ colors.
\end{theorem}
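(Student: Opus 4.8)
The plan is to verify two things about the output of $\B_{n,d}$: that it is a proper coloring, and that it uses fewer than $(d+1)n^{1/(d+1)}$ colors. Throughout write $m=n^{1/(d+1)}$, call the colors in $[dm]$ (assigned in step 1) \emph{low} and the colors $i$ with $dm<i<(d+1)m$ (assigned in steps 2 and 3) \emph{high}. Let $r_1\prec\cdots\prec r_k$ be the vertices that triggered step 3, listed in processing order; each $r_t$ receives a distinct high color and sets its witness set $W_{(t)}:=N^{\prec}_d(r_t)$. Since step 1 is First Fit on distance-$1$ adjacency, no two adjacent vertices can share a low color. The entire content of the color count is then to bound the number $k$ of high colors by showing $k<m$.

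For properness of the high colors I would argue that two adjacent vertices $u\prec v$ receiving the same high color $i$ are impossible. The later vertex $v$ cannot have opened color $i$ (step 3 always chooses a color with empty witness set), so $v$ was colored in step 2; thus there is $w_v\in W_{(t)}$ with $\mathrm{dist}(v,w_v)\le d$ and hence $\mathrm{dist}(v,r_t)\le 2d$, where $r_t$ opened color $i$, and the same holds for $u$. Concatenating the edge $uv$ with geodesics to $r_t$ yields a closed walk of length at most $4d+1$, and the intention is that the girth hypothesis $g\ge 4d+1$ together with the minimality of the color chosen in step 2 forbids it. I expect this to be the main obstacle: in a graph that is only locally tree-like such a closed walk can in principle backtrack, so the verification must exploit the least-color rule and the fine structure of $W_{(t)}$ rather than the girth bound alone, and getting this exactly right is the delicate point.

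The count itself is the heart of the matter and generalizes Kierstead's size estimate. First, the witness sets are pairwise disjoint: when $r_{t'}$ triggers step 3 (for $t<t'$) step 2 has failed, so $r_{t'}\notin N_d(W_{(t)})$, i.e. $r_{t'}$ is at distance $>d$ from every vertex of $W_{(t)}$; a common element $x\in W_{(t)}\cap W_{(t')}$ would satisfy $\mathrm{dist}(x,r_{t'})\le d$, a contradiction. Next I would lower-bound $|W_{(t)}|$. Because $g\ge 4d+1>2d$, the ball of radius $d$ about $r_t$ is a tree rooted at $r_t$, and every vertex reached below the root is a \emph{preceding} neighbor of its parent, hence precedes $r_t$ and lies in $W_{(t)}$. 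Since $r_t$ failed step 1 it has, among its predecessors, a neighbor of each low color $1,\dots,dm$; and a low-colored vertex of color $c$ has, by First Fit, a preceding neighbor of each color $1,\dots,c-1$, which in the tree point strictly outward. Following strictly decreasing color sequences $dm\ge c_1>\cdots>c_d\ge 1$ therefore produces distinct vertices at depth $d$ inside $W_{(t)}$, so
$$|W_{(t)}|\ \ge\ \binom{dm}{d}\ \ge\ \Big(\tfrac{dm}{d}\Big)^{d}\ =\ m^{d}.$$

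Finally I would assemble the bound. The sets $W_{(1)},\dots,W_{(k)}$ are disjoint subsets of $V$, and since $r_k$ is opened last, $r_k\prec r_t$ fails for every $t$, so $r_k\notin\bigcup_t W_{(t)}$; hence $k\,m^{d}\le\sum_t|W_{(t)}|\le n-1<m^{d+1}$, giving $k<m$. Together with at most $dm$ low colors this yields strictly fewer than $dm+m=(d+1)n^{1/(d+1)}$ colors, as claimed. The one genuinely non-routine quantitative step is the tree-branching estimate $|W_{(t)}|\ge\binom{dm}{d}$, which is exactly what upgrades Kierstead's $n^{1/2}$ (the case $d=1$) to $n^{1/(d+1)}$; the properness of the high color classes is where I expect the real difficulty, for the reason discussed above.
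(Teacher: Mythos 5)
Your second half --- the colour count --- is correct and is essentially the paper's own argument: the same disjointness of witness sets extracted from the failure of Step~2, the same observation that girth $\ge 4d+1$ makes the radius-$d$ ball around a Step-3 vertex an induced tree all of whose vertices precede it, the same strictly-decreasing-colour paths giving $|W_{(t)}|\ge\binom{dm}{d}\ge m^d$ (in your notation $m=n^{1/(d+1)}$), and the same packing bound yielding fewer than $m$ high colours; your remark that $r_k$ lies outside every witness set even makes the final inequality strict, a point the paper glosses over.

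The genuine gap is exactly the one you flagged and then left open: you never prove that two adjacent vertices cannot receive the same high colour, and without properness nothing is proved. But you should know that your suspicion about this step is well founded, because the paper does not close it either; it disposes of properness in one line by claiming the distance conditions force a short cycle, and that inference is invalid. Concretely, suppose $x=z_j$ opened colour $j$ in Step~3. Since $x$ failed Step~1 it has a preceding neighbour $z$, and $z\in W_j=N^{\prec}_d(x)$; any neighbour $y$ of $x$ presented later then satisfies $\mathrm{dist}(y,z)\le 2\le d$ (for $d\ge 2$), hence $y\in N_d(W_j)$, and the entire configuration is the path on $z,x,y$: all hypotheses of the paper's implication hold, yet there is no cycle at all, so the inference from $\mathrm{dist}(x,z)\le d$ and $\mathrm{dist}(y,z)\le d$ to a cycle of length at most $2d+1$ is false, and the least-colour rule cannot repair it, since nothing prevents $j$ from being the first high colour ever opened. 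The paper's other case is defective for a different reason: a cycle of length at most $4d+1$ does not contradict girth $\ge 4d+1$; already for $d=1$ the five vertices $x,x',z_j,y',y$ form (necessarily, at girth $5$) an induced $C_5$, which girth $\ge 5$ permits --- precisely the point where Kierstead's hypothesis of no induced $C_3$ or $C_5$ is stronger than a girth assumption. So your proposal is incomplete at the properness step, but the obstacle you identified is real rather than a routine verification you failed to carry out: repairing it requires either modifying the algorithm (for instance, forbidding Step~2 from assigning a colour already present on a neighbour, which in turn disturbs the disjointness argument) or proving that the degenerate backtracking configuration can never be generated by the algorithm's dynamics, and neither argument appears in the paper or in your proposal.
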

We note that $\B_{n,1}$ is exactly $\B_n$ so in case $d=1$ Lemma \ref{kier} holds. Our proof goes along the same line as Kierstead's proof.

\begin{proof}
First we prove that $\B_{n,d}$ produces a coloring.
Assume to the contrary that two adjacent vertices $x{\prec}y$ have the same color $j$. Clearly $y$ is not colored by Step~1. Thus $j>dn^{1/(d+1)}$ and hence $x$ is not colored by Step~1. 
Since only the first vertex colored $j$ can be colored by Step~3, $y$ must be colored by Step~2 so $y\in N_d(W_j)$. If $x$ is colored by Step~3 then $W_j\subset N_d(x)$ and $y\in N_d(W_j)$ so there exists $z\in W_j$ that both $\mathrm{dist}(x,z)\le d$ and $\mathrm{dist}(y,z)\le d$ hold. But then $G^{\prec}$ contains a cycle of length at most $2d+1$, a contradiction. 
If $x$ is colored by Step~2 then both $x$ and $y$ are in $N_d(W_j)$ so there exist (not necessarily distinct) $x',y'\in W_j$ with  $\mathrm{dist}(x',x)\le d$, $\mathrm{dist}(y',y)\le d$, $\mathrm{dist}(x',z)\le d$ and $\mathrm{dist}(y',z)\le d$ where $z$ is the first vertex colored with $j$. In this case $G^{\prec}$ contains a cycle with length at most $4d+1$, a contradiction. So $\B_{n,d}$ produces a coloring.

Now we give an upper bound for the number of colors used by $\B_{n,d}$. At most $dn^{1/(d+1)}$ colors are used in Step~1.
Let $j>dn^{1/(d+1)}$ and $z_j$ be the first vertex colored $j$. 
From the assumption on $g(G^{\prec})$ it follows that the subgraph induced by $W_j\cup\{z_j\}$ is a tree.
Since $z_j$ is not colored by Step~1, due to the greediness of Step~1, it has neighbors colored $1,2,\ldots,dn^{1/(d+1)}$ in $W_j$ and each vertex $x\in N_{d-1}(z_j)$ colored $i\le dn^{1/(d+1)}$ has neighbors colored $1,2,\ldots,i-1$ in $W_j$. Thus, for each $S\subset [dn^{1/(d+1)}]$, $|S|\le d$ there exists $x\in W_j$ such that the colors occuring on the (unique) $z_j$--$x$ path are exactly the elements of $S\cup\{j\}$. So counting the $z_j$--$x$ paths with length at most $d$ over all possible $x$ we get that 
$$|W_j|\ge \sum_{\ell=1}^{d}\binom{dn^{1/(d+1)}}{\ell} > \binom{dn^{1/(d+1)}}{d} > n^{d/(d+1)}.$$
Since $z_j\not\in N(W_i)$ if $i\ne j$ that is $W_i\cap W_j = \emptyset$, at most $n/(n^{d/(d+1)}) = n^{1/(d+1)}$ colors are used in Steps~2 and 3. 
Thus $$\chi_{B_{n,d}}(G^{\prec})\le (d+1)n^{{1}/{(d+1)}}.$$
\end{proof}

If $G^{\prec}$ is an online graph on $n$ vertices with $g(G^{\prec})=g=4d+1$ then $\chi_{B_{n,d}} \le \frac{g+3}{4}n^{{4}/{(g+3)}}$.
We note that Erd\H{o}s \cite{ERD} proved that for any $g>0$ and sufficiently large $n$ there exists a graph $G$ on $n$ vertices with girth greater than $g$ and with $\chi(G) > n^{1/(2g)}$.

\subsection{Graphs with high oddgirth}\label{oddg}
Hereinafter we will need the following online algorithm developed by Lov\'asz (posed as ''an easy exercise'' in \cite{LOV}, see \cite{KIE1} for details) as an auxiliary algorithm which colors any bipartite graph on $n$ vertices with $2 \log_2 n$ colors.
\begin{quote}
\textbf{Algorithm $\AA$} \\
Consider the input sequence $v_1{\prec}\ldots{\prec}v_n$ of a 2-colorable online graph $G^{\prec}$.\\
When $v_i$ is presened there is a unique partition $(I_1,I_2)$ of the connected component of $G_i^{\prec}$ to which $v_i$ belongs, into independent sets such that $v_i\in I_1$. Assign $v_i$ the least color not already assigned to some vertex of $I_2$.
\end{quote}
Obviously algorithm $\AA$ produces a coloring.
The following lemma is a slight improvement of Theorem 2 of \cite{KIE1}. 
\begin{lemma}\label{lovasz}
Suppose that $G^{\prec}$ is a 2-colorable online graph and $\AA$ uses at least $k$ colors on a connected component $C$ of $G^{\prec}_i$. Let $v\in C$ be a vertex colored $k$ by algorithm $\AA$. Then both partite sets of $C$ contain at least $2^{k/2-1}$ vertices with distance at most $2^{k/2}$ from $v$.
\end{lemma}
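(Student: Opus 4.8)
The plan is to prove the statement by strong induction on $k$, carried out at the instant $v$ is assigned its colour: the component of $v$ only gains vertices and edges afterwards, so distances never increase and any witnesses found at that instant persist in $C$. A preliminary but important reduction is to take $v$ to be the \emph{first} vertex of $C$ that receives colour $k$. This matters: a vertex can be assigned a large colour merely because its whole opposite class happens to be colourful, even when its own neighbourhood is sparse, and such a ``remotely forced'' vertex need not have the claimed local density; restricting to the first occurrence of colour $k$ is what ties the colour to nearby structure. The base cases $k\le 2$ are immediate, since $v$ itself lies in its partite set and, for $k=2$, a vertex coloured $1$ sits in the opposite set at distance $1$.

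The engine of the induction is the defining rule of $\AA$. If $v$ gets colour $k$ then, at that moment, every colour $1,\dots,k-1$ already occurs in the opposite partite class $B$ (with $v$ in class $A$); in particular $B$ contains a vertex $u$ coloured $k-1$. Since $v$ is the last vertex of its component to arrive, $u$ was coloured strictly earlier, so the component $C_u$ of $u$ at its own arrival does not contain $v$ — this temporal fact is the source of the disjointness I will exploit. Applying the rule to $u$, the class opposite to $u$, which is exactly $v$'s class $A$, carries all of $1,\dots,k-2$ inside $C_u$, furnishing a vertex $w\in A\cap C_u$ coloured $k-2$ whose arrival precedes that of $u$, with component $C_w\subseteq C_u$ not containing $u$. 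I thus obtain the nested, temporally ordered triple $C_w\subsetneq C_u\subsetneq C$ with $w,u,v$ of colours $k-2,k-1,k$ lying on $A,B,A$, and I apply the induction hypothesis to $w$ (and, where convenient, to $u$), translating ``$w$'s class'' and ``$u$'s class'' back to $v$'s frame via the fact that the class opposite $w$ is $B$ and the class opposite $u$ is $A$.

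The quantitative heart is the doubling across these two colour steps, i.e.\ producing $2^{k/2-1}=2\cdot 2^{(k-2)/2-1}$ vertices per class rather than the mere $2^{(k-1)/2-1}$ that a single descent to $u$ would give. The idea is to union the witness family supplied by the induction hypothesis inside $C_w$ with a second family drawn from the fresh part of $C$ created by the arrivals of $u$ and $v$ outside $C_w$, and to certify that the two families are vertex-disjoint precisely because everything in the first lives in the earlier component $C_w$, while $u,v$ and their newly attached structure lie outside it. For the radius I would track it additively: each two-step descent adds only the length of the path joining $v$ to the substructure, and since the admissible radius grows geometrically with ratio $\sqrt2$ per colour, these hops comfortably fit inside $2^{k/2}$, so it is the \emph{count}, not the distance, that binds. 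I expect the main obstacle to be exactly this disjointness bookkeeping together with the locality reduction of the first paragraph: a single chain of descents yields only linearly many witnesses, so the exponential bound forces genuine branching, and the delicate point is guaranteeing that the two recursively generated families neither collide nor escape the ball of radius $2^{k/2}$ — this is where the temporal nesting $C_w\subsetneq C_u\subsetneq C$ does the essential work.
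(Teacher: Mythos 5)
Your proposal contains two genuine gaps, and the first one breaks the counting entirely. The exponential doubling cannot come from the nested chain $C_w\subsetneq C_u\subsetneq C$ you construct: nested components yield nested, potentially overlapping witness sets, so applying the induction hypothesis to $w$ and ``where convenient'' to $u$ never adds counts --- the IH family for $u$ lives in $C_u\supseteq C_w$ and may well contain the IH family for $w$. Your fallback, a ``second family drawn from the fresh part of $C$ created by the arrivals of $u$ and $v$ outside $C_w$,'' is never actually constructed: you give no reason why $C\setminus C_w$ should contain $2^{(k-2)/2-1}$ vertices of each class, let alone within the right distance of $v$. The paper's mechanism is different and is the missing key: it produces \emph{two} vertices of the \emph{same} color $k-2$ in \emph{opposite} classes --- one ($w\in A$) from the rule applied to $u$, exactly as you do, and a second one ($w''\in B$) directly from the rule applied to $v$, since all colors $1,\dots,k-1$, in particular $k-2$, already occur in $B$. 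The crucial observation, absent from your proposal, is that two vertices with the same color lying in opposite partite classes must have been in \emph{disjoint} components of $G^{\prec}_t$ at the time $t$ the later of them was colored; otherwise $\AA$'s rule would have forbidden that color. Disjointness therefore comes from the coloring rule itself, not from temporal nesting, and it is this that lets the two IH counts add, doubling once per two colors.

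The second gap is the radius. You track it ``additively'' via ``the length of the path joining $v$ to the substructure,'' but nothing bounds $\mathrm{dist}(v,u)$ or $\mathrm{dist}(u,w)$: the vertex $u$ colored $k-1$ in $B$ can be arbitrarily far from $v$. Your proposed fix --- restricting to the first vertex of $C$ colored $k$ --- neither bounds these distances nor is it a legitimate reduction: the conclusion is about a ball centered at $v$, so proving the statement for the first occurrence of color $k$ proves strictly less than the lemma (indeed you assert that other vertices colored $k$ ``need not have the claimed local density,'' i.e., you propose to prove a different statement). The paper resolves this with a dichotomy you are missing: either every vertex of the class opposite to $v$ lies within distance $2^{k/2}$ of $v$, in which case the whole component essentially lies in the ball (every vertex of $v$'s own class is adjacent to a vertex of the opposite class) and the count from the disjoint-components argument is automatically local; or some vertex of the opposite class is farther than $2^{k/2}$, and then the path from $v$ to that vertex by itself already contains at least $2^{k/2-1}$ vertices of each class within distance $2^{k/2}$ of $v$, so the lemma holds with no recursion at all. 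This dichotomy is precisely what handles the ``remotely forced'' vertices you worry about, and it does so for \emph{every} vertex colored $k$, not just the first one.
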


\begin{proof}
We argue by induction on $k$ and note that the base step is trivial. For the induction step observe that if $\AA$ assigns color $k+2$ to $v_i$ then $\AA$ must have already assigned color $k$ to some vertex $v_p\in I_2$ and color $k+1$ to some other vertex in $I_2$. Thus $\AA$ must have assigned color $k$ to some vertex $v_q\in I_1$. $\AA$ assigned the same color to $v_p$ and $v_q$, hence $v_p$ and $v_q$ must be in separate components of $G^{\prec}_t$ where $t=\max\{p,q\}$. 
Thus by the induction hypothesis each of the color classes of these connected components must have at least $2^{ k/2-1}$ vertices and so the color classes of the components of $v_i$ have at least  $2^{(k+2)/2-1}$ vertices.

If $I_2 = N_{2^{k/2}}(v_i)$ the lemma holds. Otherwise, there exists $u\in I_2$ with $\mathrm{dist}(u,v_i)>2^{k/2}$, therefore there is a path in $C$ with length at least $2^{k/2}$ containing $v_i$, so the assertion follows.
\end{proof}

Now we modify Algorithm~$\B_n$ for graphs with high oddgirth.
\begin{quote}
\textbf{Algorithm $\BO_{n,d}$} \\
Let $d$ be a positive integer.
Consider the input sequence $v_1{\prec}\ldots{\prec}v_n$ of an online graph $G^{\prec}$ with oddgirth at least $4d+1$.
Set $r=(n/(d\log_2 d))^{1/2}$.
Initialize by setting $H_i=\emptyset$ for all $i\in [r]$ and $W_i=\emptyset$ for all $r<i<2r\log_2 d + 2n/(rd)$. At the $s$-th stage the algorithm processes the vertex $v_s$ as follows.
\begin{enumerate}
\item If there exists $i\in [r]$ such that the subgraph induced by $H_i \cup \{v_s\}$ is 2-colorable 
and algorithm $\AA$ uses at most $2\log_2 d$ colors to color $H_i \cup \{v_s\}$,
then let $j$ be the least such $i$. Set $H_j = H_j \cup \{v_s\}$ and color $v_s$ (in subgraph induced by $H_j$) by algorithm $\AA$. ($\AA$ uses colors $2(j-1)\log_2 d + 1, \ldots, 2j\log_2 d$ to color $H_j$.)
\item Otherwise, if there exists $i>r$ such that $v_s\in N_{d,\mathrm{odd}}(W_i)$ then color $v_s$ with the least such $i$.
\item Otherwise, let $j$ be the least integer $i>r$ such that $W_i=\emptyset$. Set $W_j= N^{\prec}_{d,\mathrm{odd}}(v_s)\cap\left(\bigcup_{\ell=1}^{r}H_{\ell}\right)$ and color $v_s$ with $j$. 
\end{enumerate}
\end{quote}

\begin{theorem}
Suppose that $d>1$ integer.
Algorithm $\BO_{n,d}$ produces a coloring of any graph $G^{\prec}$ on $n$ vertices having oddgirth at least $4d+1$ with at most $4(n\log_2 d / d)^{1/2}$ colors.
\end{theorem}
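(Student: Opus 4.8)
### Proof Strategy

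The plan is to mirror the structure of the proof of Theorem~\ref{t1} exactly, splitting the argument into two parts: first showing that $\BO_{n,d}$ produces a valid coloring, then bounding the total number of colors used across the three steps. The difference from the high-girth case is that Step~1 no longer uses First Fit but instead packs vertices into $r$ bipartite ``bins'' $H_1,\dots,H_r$, each colored by Lov\'asz's algorithm $\AA$, and the distance notion throughout Steps~2 and~3 is the \emph{odd}-distance neighborhood $N_{d,\mathrm{odd}}$ rather than $N_d$.

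First I would verify correctness. Within a single bin $H_j$, algorithm $\AA$ produces a proper coloring by construction, and the color ranges $2(j-1)\log_2 d+1,\dots,2j\log_2 d$ for distinct bins are disjoint, so no monochromatic edge can arise inside Step~1. For the colors $i>r$ assigned in Steps~2 and~3, I would argue by contradiction as in Theorem~\ref{t1}: suppose adjacent $x\prec y$ both receive color $j>r$. Only the first vertex $z_j$ colored $j$ comes from Step~3, so $y$ is colored in Step~2, giving $y\in N_{d,\mathrm{odd}}(W_j)$. The key point is that $W_j$ consists of vertices at \emph{odd} distance at most $d$ from $z_j$. If $x$ comes from Step~3, then $x=z_j$ and there is $w\in W_j$ with $\mathrm{dist}(z_j,w)$ odd and $\le d$ and $\mathrm{dist}(y,w)$ odd and $\le d$; combined with the edge $xy$ this closes a cycle whose length is $\mathrm{dist}(z_j,w)+\mathrm{dist}(y,w)+1$, which is \emph{odd} and at most $2d+1$, contradicting oddgirth $\ge 4d+1$. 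If instead $x$ also comes from Step~2, both $x,y\in N_{d,\mathrm{odd}}(W_j)$ yield witnesses $x',y'\in W_j$, and assembling the two odd paths from $z_j$ together with the paths to $x,y$ and the edge $xy$ produces an odd closed walk of length at most $4d+1$, again contradicting the oddgirth. I expect the parity bookkeeping here to be the subtle step: one must check that each assembled cycle length is genuinely odd, which is exactly why $N_{d,\mathrm{odd}}$ is used in place of $N_d$.

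Next I would bound the color count. Step~1 uses at most $2r\log_2 d$ colors, $2\log_2 d$ per bin. For Steps~2 and~3, I would fix $j>r$ with first vertex $z_j$ and show each $W_j$ is large, so that few such classes can be created. Because $\AA$ \emph{failed} to place $z_j$ into any bin $H_i$ during Step~1, for every $i\in[r]$ the vertex $z_j$ either destroyed 2-colorability of $H_i$ or forced $\AA$ to exceed $2\log_2 d$ colors; invoking Lemma~\ref{lovasz} with $k=2\log_2 d$, the relevant component in each bin already contains at least $2^{k/2-1}=d/2$ vertices at bounded distance, and these vertices lie at odd distance from $z_j$ and hence populate $W_j$. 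I would use the oddgirth hypothesis to guarantee the contributions from distinct bins are disjoint, yielding a lower bound of order $rd$ on $|W_j|$. Since the $W_j$ are pairwise disjoint (as $z_j\notin N_{d,\mathrm{odd}}(W_i)$ for $i\ne j$), at most $n/|W_j|=O(n/(rd))$ colors are spent in Steps~2 and~3.

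Finally I would add the three bounds and substitute $r=(n/(d\log_2 d))^{1/2}$, which is chosen precisely to balance the Step~1 term $2r\log_2 d$ against the Steps~2--3 term $2n/(rd)$; both then equal $2(n\log_2 d/d)^{1/2}$, summing to the claimed $4(n\log_2 d/d)^{1/2}$. The main obstacle I anticipate is the middle step: extracting a clean lower bound on $|W_j|$ from the failure of Step~1 requires pinning down exactly how Lemma~\ref{lovasz} certifies many vertices at \emph{odd} distance from $z_j$, and ensuring no double-counting occurs across the $r$ bins despite their shared ambient graph. The correctness argument, by contrast, is a routine parity-adjusted replay of Theorem~\ref{t1}.
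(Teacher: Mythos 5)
Your correctness argument and your final arithmetic match the paper's proof, but there is a genuine gap in the counting step, precisely at the point you yourself flagged as delicate. You correctly state the dichotomy --- for each $i\in[r]$, adding $z_j$ to $H_i$ either destroys 2-colorability or forces $\AA$ beyond $2\log_2 d$ colors --- but you then handle \emph{both} branches by invoking Lemma~\ref{lovasz}. That lemma's hypothesis is that the graph is 2-colorable, so it gives nothing in the first branch: there $H_i$ itself may well be colored by $\AA$ with only two colors, and the sole obstruction to admitting $z_j$ is the odd cycle its arrival creates. The paper treats this branch by a separate argument, and this is where the oddgirth hypothesis actually enters the counting: since the oddgirth is at least $4d+1$, the odd cycle through $z_j$ inside $H_i\cup\{z_j\}$ has length at least $4d+1$, and walking along it from $z_j$ in both directions one finds at least $d$ vertices of $H_i$ at odd distance at most $d$ from $z_j$ (a parity check, again using the oddgirth, shows the true graph distances retain the same parity as the cycle distances), hence $|W_j\cap H_i|\ge d$ in that branch. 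Only in the second branch (2-colorable but too many colors) does the paper argue via the proof of Lemma~\ref{lovasz} to get $|W_j\cap H_i|\ge d/2$; without the separate odd-cycle argument, your lower bound on $|W_j|$ simply fails whenever some bin falls into the first branch.

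Relatedly, your stated use of the oddgirth hypothesis in the counting --- ``to guarantee the contributions from distinct bins are disjoint'' --- is misplaced: the sets $H_1,\dots,H_r$ are pairwise disjoint by construction, since Step~1 places each vertex into at most one bin, so no hypothesis is needed there; the paper merely notes $H_k\cap H_\ell=\emptyset$ for $k\ne\ell$. The oddgirth is needed for the non-2-colorable branch described above (and, as you do use it, for correctness). With that branch repaired, the rest of your outline --- $|W_j|\ge rd/2$, the $W_j$ pairwise disjoint, hence at most $2n/(rd)$ colors in Steps~2 and~3, summing with the $2r\log_2 d$ colors of Step~1 to $4(n\log_2 d/d)^{1/2}$ under the choice $r=(n/(d\log_2 d))^{1/2}$ --- agrees with the paper's proof.
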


\begin{proof}
First we prove that $\BO_{n,d}$ produces a coloring.
Assume to the contrary that two adjacent vertices $x{\prec}y$ have the same color $j$. Clearly $y$ is not colored by Step~1. Thus $j>r$ and hence $x$ is not colored by Step~1. 
Since only the first vertex colored $j$ can be colored by Step~3, $y$ must be colored by Step~2. If $x$ is colored by Step~3 then $W_j\subset N_{d,\mathrm{odd}}(x)$ and $y\in N_{d,\mathrm{odd}}(W_j)$ so there exists $z\in W_j$ that both $\mathrm{dist}(x,z)\le d$ and $\mathrm{dist}(y,z)\le d$ are odd. But then $G^{\prec}$ contains a closed walk with odd length at most ${2d+1}$ so it contains an odd cycle with length at most ${2d+1}$, a contradiction. 
If $x$ is colored by Step~2 then both $x$ and $y$ are in $N_{d,\mathrm{odd}}(W_j)$ so there exist (not necessarily distinct) $x',y'\in W_j$ with odd distances $\mathrm{dist}(x',x)\le d$, $\mathrm{dist}(y',y)\le d$, $\mathrm{dist}(x',z)\le d$ and $\mathrm{dist}(y',z)\le d$ where $z$ is the first vertex colored with $j$. In this case $G^{\prec}$ contains a closed walk with odd length at most $4d+1$, a contradiction. So $\BO_{n,d}$ produces a coloring.

Now we give an upper bound for the number of colors used by $\BO_{n,d}$. At most $2r\log_2 2d$ colors are used in Step~1.
Let $j>r$ and $z_j$ be the first vertex colored $j$.
Now we show that $|W_j\cap H_k| \ge d$ for all $k\le r$. Since $z_j$ is not colored by Step~1 we have two cases.
In the first case the subgraph induced by $H_k\cup\{z_j\}$ contains an odd cycle containing $z_j$ with length at least $4d+1$ so $|W_j\cap H_k| \ge d$ by the definition of $W_j$.
In the second case the subgraph induced by $H_k\cup\{z_j\}$ has is 2-colorable but algorithm $\AA$ uses at least $2\log_2 d +1$ colors to color this subgraph. From the proof of Lemma~\ref{lovasz} it follows that $|W_j\cap H_k| \ge 2^{(2\log_2 d)/2 - 1} = d/2$. Since $H_k\cap H_{\ell}=\emptyset$ if $k\ne\ell$, we get that $|W_j|\ge rd/2$.
Since $W_i\cap W_j = \emptyset$ if $i\ne j$, at most $n/(rd)$ colors are used in Steps~2 and 3. Thus 
\begin{eqnarray*}
\chi_{GB}(G^{\prec}) &\le& 2r\log_2 d + \frac{n}{rd/2}\\
&=& 2\left(\frac{n}{d\log_2 d}\right)^{1/2}\log_2 d + \frac{2n}{d\left(\frac{n}{d\log_2 d}\right)^{1/2}}\\
&=& 4\left(\frac{n\log_2 d}{d}\right)^{1/2}.
\end{eqnarray*}
\end{proof}

We note that running $\BO_{n,d}$ on $G$ with oddgirth $4d+1$ choosing $r=(n/(2d))^{1/2}$ and exploiting that the subgraphs induced by $H_j$ $(j\in [r]$) are 2-colorable we get that $\chi(G)\le 2r + \frac{n}{rd} = 2\left(\frac{2n}{d}\right)^{1/2}$.
Denley \cite{DEN} proved that if $G$ is a graph on $n$ vertices with oddgirth at least $2k+3 \ (k\ge 2)$ then its independence number 
$\alpha(G)\ge \left(\frac{n}{k}\right)^{\frac{k}{k+1}}(\log n)^{\frac{1}{k+1}}$.
From this one can prove a stronger upper bound for the chromatic number of graphs with large oddgirth than the above.

Note that all algorithms contained in the paper can be easily modified to color arbitraty input graphs.

\section*{Acknowledgement} The author wish to thank P\'eter Hajnal for his guidance and suggestions and Mih\'aly Hujter for his helpful observations. This research has been supported by OTKA Grant K76099.


\begin{thebibliography}{99}  

\bibitem{DEN} A. Denley, The independence number of graphs with large odd girth, \emph{Electronic Journal of Combinatorics}, \textbf{1}, 1994.

\bibitem{GYA} A. Gy\'arf\'as, J. Lehel, On-line and first-fit colorings of graphs,
\emph{Journal of Graph Theory}, \textbf{12}, 217--227, 1988.

\bibitem{ERD} P. Erd\H{o}s, Graph theory and probability. \emph{Canad. J. Math.}, \textbf{11}, 34--38, 1959.

\bibitem{KIE1} H. A. Kierstead, Coloring Graphs On-line, in \emph{Online algorithms: 
The State of the Art (A. Fiat, and G. J. Woeginger (eds.)), LNCS 1442}, 281--305, 1998.

\bibitem{KIE2} H. A. Kierstead, On-line coloring $k$-colorable graphs,
\emph{Israel Journal of Mathematics}, \textbf{105}, 93--104, 1998.

\bibitem{LOV} L. Lov\'asz, M. Saks, W. T. Trotter, An on-line graph coloring algorithm
with sublinear performance ratio, \emph{Discrete Mathematics}, \textbf{75}, 319--325, 1989.  

\bibitem{Vis} S. Vishwanathan. Randomized on-line graph coloring, \emph{Journal of Algorithms}, \textbf{13}, 657--669,
1992.

\end{thebibliography}
\end{document}